\numberwithin{equation}{section} 
\numberwithin{figure}{section} 
\theoremstyle{plain}
\newtheorem*{thm*}{Theorem}
\theoremstyle{plain}
\newtheorem{thm}{Theorem}[section]
\theoremstyle{definition}
\newtheorem{defn}[thm]{Definition}
\theoremstyle{plain}
\newtheorem{lem}[thm]{Lemma}
\theoremstyle{plain}
\theoremstyle{plain}
\theoremstyle{remark}
\theoremstyle{remark}
\theoremstyle{remark}
\newtheorem*{acknowledgement*}{Acknowledgement}
\begin{document}

\title[A general version of Beltrami's theorem in Finslerian setting]{A general version of
Beltrami's theorem in Finslerian setting}

\author[Bucataru]{Ioan Bucataru}
\address{Faculty of  Mathematics \\ Alexandru Ioan Cuza University \\ Ia\c si, 
  Romania}
\email{bucataru@uaic.ro}
\urladdr{http://www.math.uaic.ro/\textasciitilde{}bucataru/}
\author[Cre\c tu]{Georgeta Cre\c tu}
\address{Faculty of  Mathematics \\ Alexandru Ioan Cuza University \\ Ia\c si, 
  Romania}
\email{getutza\_cretzu@yahoo.com}

\date{\today}

\begin{abstract}
We present a new proof of a Finslerian version of Beltrami's theorem
(1865) which works also in dimension 2. 
\end{abstract}

\subjclass[2000]{53C60, 53B40}

\keywords{Finsler metrics, constant flag curvature, Beltrami's theorem,
 Schur's lemma, Cotton tensor}

\maketitle

\section{Introduction}
Flag curvature in Finsler geometry is a natural extension of sectional
curvature. While in Riemannian geometry,
metrics of constant sectional curvature are well understood and classified, in
Finsler geometry the problem is far from being solved, \cite{Li14, ZSh}. In
Finsler geometry, there are many characterisations for metrics of
constant flag curvature, \cite{AZ06, B29, BC18, BF19}. In this paper, in
Theorem \ref{CONSTANT:FC}, we
characterise Finsler metrics of constant
curvature in terms of three conditions, which we call
CC-conditions: \eqref{isotropic1}, \eqref{dJ:xi},
\eqref{dh:xi}. 
There are two motivations for this formulation of the
three CC-conditions. First, we can view Theorem \ref{CONSTANT:FC} as
a Finslerian extension of Schur's lemma that includes dimension $2$ as
well. Secondly, it is easy to check whether or not each of the three
CC-conditions are projectively invariant.

The first and third CC-condition are invariant under projective deformations and
they have corresponding quantities in the Riemannian context. The
second CC-condition is not projectively invariant unless the projective
factor is a Hamel function. Therefore, the second CC-condition gives,
for a Finsler metric of constant curvature, the class of projectively
equivalent Finsler metrics that also have constant curvature. 

Beltrami's theorem states that a Riemannian metric projectively
equivalent to a Riemannian metric of constant curvature has also constant
curvature, \cite[vol. IV, p.19]{Sp}. A classic proof of Beltrami's theorem uses the projective
Weyl tensor if $n >2$ and the Liouville (Cotton) tensor if $n=2$, see
\cite{Mat} for a recent survey on the projective geometry of affine
sprays. 

In this work we prove the following general version of Beltrami's
theorem, in Finslerian setting,  for dimension $n>1$. 

\emph{If two Finsler metrics over the same connected manifold, of
  dimension $n>1$, have projectively equivalent geodesic sprays and one is of constant flag curvature
then the other is also of contant curvature if and only if the
projective factor is a Hamel function  (Theorem \ref{Beltrami Theorem}).}

For the proof of Theorem \ref{Beltrami Theorem}, we study the
projective invariance of the three CC-conditions proposed in Theorem
\ref{CONSTANT:FC}; the first two conditions for $n>2$ and the last two
conditions for $n=2$. The projective invariant $2$-form from the third
CC-condition can be viewed as the Cotton (Liouville) tensor of the
spray. The Cotton tensor has been introduced very recently by Crampin
in the geometry of a spray, \cite{Crampin19}, as an obstruction for
$2$-dimensional sprays  to be projectively $R$-flat.

In dimension $n>2$, the Finslerian version of Beltrami's theorem
has also been proved using a Weyl-type curvature tensor that characterises
Finsler metrics of constant curvature in \cite[Theorem 5.2]{BC18} and an
algebraic characterisation for Finsler metrics of constant curvature in \cite{BF19}.

\section{Isotropic sprays}
We start this section with a brief overview of the geometric setting
associated to a spray.  We consider $M$ a smooth, $n$-dimensional, connected manifold, with
$n >1$.  $TM$ is the tangent bundle and $T_0M=TM-\{0\}$ is the slit
tangent bundle of $M$. On $TM$  there are two canonical structures
that we will use further on, the Liouville (dilation) vector field $\mathcal{C}$ and
the vertical endomorphism $J$.

A \emph{spray} $S$ is a special vector field on $T_0M$ such that $JS=\mathcal{C}$ and
$[\mathcal{C},S]=S$. A \emph{geodesic} of a spray is a smooth curve on
$M$ whose velocity is an integral curve of the spray.
 
 An orientation preserving reparametrisation
 $t\mapsto \widetilde{t}(t)$ of the geodesics of a spray $S$
leads to the geodesics of a new spray $\widetilde{S}=S-2P\mathcal{C}$,  where $P\in
 C^\infty(T_0M)$ is positively 1-homogeneous. 
\begin{defn}
Two sprays $S$ and $\widetilde{S}$ are \emph{projectively related} if their geodesics coincide up to an orientation preserving reparametrisation.
\end{defn}

In this work we will use the Fr\"olicker-Nijenhuis formalism to
associate geometric data to a  spray $S$ \cite{Grifone72,
  Gr, SLK, Youssef}. The first  of them is a canonical nonlinear
connection, that determines a horizontal and a vertical projector:
\begin{equation*}
h=\frac{1}{2}(\operatorname{Id}-[S,J]), \quad v=\frac{1}{2}(\operatorname{Id}+[S,J]).
\end{equation*}

The \emph{Jacobi  endomorphism}  and the \emph{curvature} of (the
nonlinear connection determined by) $S$ are defined by
\begin{equation}\label{R-curv-tensor}
\Phi:=v\circ[S,h], \ R:=\frac{1}{2}[h,h],
\end{equation}
respectively. They are related by
\begin{equation}\label{relation:phi-R}
3R=[J,\Phi], \quad \Phi=i_SR.
\end{equation} 
The Ricci scalar $\rho\in C^\infty(T_0M)$ is  given by
\begin{equation}
\rho:=\frac{1}{n-1}\operatorname{Tr}(\Phi),
\end{equation} 
where ``Tr'' means semi-basic trace (see, e.g., \cite[p.134]{Youssef}). 
\begin{defn}
A spray $S$ is said to be \emph{isotropic} if there exists a semi-basic 1-form $\alpha\in\Lambda^1(T_0M)$ such that the Jacobi endomorphism can be written as follows: 
\begin{equation}\label{phi}
\Phi=\rho J-\alpha\otimes \mathcal{C}.
\end{equation}
\end{defn}
For an isotropic spray $S$, due to \eqref{relation:phi-R}, we have that $\Phi(S)=0$ and
hence $\rho=i_S\alpha.$

The formulae \eqref{relation:phi-R} allow to reformulate the
isotropy condition \eqref{phi} in terms of the curvature tensor $R$.
\begin{lem} \label{lem:iso-R}
A spray is isotropic if and only if its curvature tensor is
of the form
\begin{equation}\label{isotropic Curvature}
  R= \xi \wedge J - d_{J}\xi \otimes {\mathcal C},
\end{equation} 
for a semi-basic  $1$-form $\xi$ on $T_0M$. 
\end{lem}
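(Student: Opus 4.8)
The plan is to exploit the two structural identities recorded in \eqref{relation:phi-R}, namely $\Phi = i_S R$ and $3R = [J,\Phi]$, in order to pass back and forth between the Jacobi endomorphism and the curvature, translating the isotropy form \eqref{phi} into the curvature form \eqref{isotropic Curvature} and conversely.

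For the easy implication (curvature form $\Rightarrow$ isotropy), I would simply contract \eqref{isotropic Curvature} with $S$. Using $i_S J = JS = \mathcal{C}$ together with $(\xi\wedge J)(S,X) = (i_S\xi)JX - \xi(X)\mathcal{C}$, one finds
\begin{equation*}
\Phi = i_S R = (i_S\xi)\,J - (\xi + i_S d_J\xi)\otimes\mathcal{C},
\end{equation*}
which is exactly \eqref{phi} with $\rho = i_S\xi$ and $\alpha = \xi + i_S d_J\xi$; the compatibility $\rho = i_S\alpha$ then follows from $i_S i_S = 0$. Hence $S$ is isotropic.

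For the converse (isotropy $\Rightarrow$ curvature form) I would start from \eqref{phi} and compute $3R = [J,\Phi] = [J,\rho J] - [J,\alpha\otimes\mathcal{C}]$. Since $[J,J]=0$ and $J^2=0$, the first bracket collapses to $[J,\rho J] = d_J\rho\wedge J$. The delicate term is the second Fr\"olicher-Nijenhuis bracket, which I expect to evaluate, using $J\mathcal{C}=0$, the homogeneity relation $\mathcal{L}_\mathcal{C}J = -J$, and $i_J\alpha = 0$ for semi-basic $\alpha$, to
\begin{equation*}
[J,\alpha\otimes\mathcal{C}] = -\alpha\wedge J + d_J\alpha\otimes\mathcal{C}.
\end{equation*}
Combining the two brackets gives $3R = (d_J\rho + \alpha)\wedge J - d_J\alpha\otimes\mathcal{C}$. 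Setting $\xi := \tfrac13(\alpha + d_J\rho)$ and using $d_J^2 = \tfrac12 d_{[J,J]} = 0$, so that $d_J\xi = \tfrac13 d_J\alpha$, this is precisely $R = \xi\wedge J - d_J\xi\otimes\mathcal{C}$, as required.

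The main obstacle is the explicit evaluation of $[J,\alpha\otimes\mathcal{C}]$: it is the only step that does not reduce to a one-line identity and must be carried out either from the definition of the Fr\"olicher-Nijenhuis bracket of two vector-valued $1$-forms or in natural coordinates, keeping careful track of the homogeneity of $\alpha$ and $\rho$ (degree $1$ and $2$, respectively). A useful internal consistency check on the resulting $\xi$ is the homogeneity identity $i_S d_J\omega = 2\omega - d_J(i_S\omega)$, valid for degree-one semi-basic $1$-forms $\omega$, which reconciles the two directions: it turns the relation $\alpha = \xi + i_S d_J\xi$ from the easy implication into $\alpha = 3\xi - d_J\rho$, recovering the same $\xi = \tfrac13(\alpha + d_J\rho)$.
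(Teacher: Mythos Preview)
Your proposal is correct and follows essentially the same route as the paper: both directions use the identities $3R=[J,\Phi]$ and $\Phi=i_SR$, arrive at the same intermediate expression $3R=(d_J\rho+\alpha)\wedge J - d_J\alpha\otimes\mathcal{C}$, and set $\xi=\tfrac13(\alpha+d_J\rho)$. The one step the paper makes explicit that you gloss over is, in the implication \eqref{isotropic Curvature}$\Rightarrow$isotropy, the verification that the coefficient $i_S\xi$ of $J$ really is the Ricci scalar $\rho=\tfrac{1}{n-1}\operatorname{Tr}(\Phi)$; the paper obtains this by taking the semi-basic trace of $\Phi=i_S\xi\,J-(\xi+i_Sd_J\xi)\otimes\mathcal{C}$, and you should add that check rather than simply asserting ``$\rho=i_S\xi$''.
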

\begin{proof}
Assume first that $S$ is isotropic and hence its Jacobi
endomorphism $\Phi$ is given by formula \eqref{phi}. Then, using
formulae (18) and (22) in \cite{Youssef}, we find 
\begin{equation}\label{R}
\begin{aligned}
3R =[J, \Phi] =[J, \rho J - \alpha \otimes {\mathcal C}]  =  (d_{J}\rho +\alpha)\wedge J -d_{J}\alpha\otimes {\mathcal C},
 \end{aligned}
 \end{equation}
which gives the desired equality with the  the semi-basic $1$-form 
\begin{equation}\label{xi}
\xi:= \frac{1}{3}\left(\alpha+d_{J}\rho\right).
\end{equation}
Conversely, assume that the curvature tensor of $S$ can be given by
formula \eqref{isotropic Curvature}.  Then 
\begin{eqnarray*}\label{isr}
\Phi=i_SR  = i_S\left( \xi \wedge J - d_{J}\xi \otimes {\mathcal
    C}\right) =i_S\xi  J-(i_Sd_J\xi+\xi)\otimes\mathcal{C}.
\end{eqnarray*} 
Taking semi-basic trace and using formula (26) in \cite{Youssef} we
find that $\operatorname{Tr}(\Phi)= (n-1) i_S\xi$. Thus the Ricci scalar is
given by $\rho=i_S\xi$, which concludes the proof. 
\end{proof}
The above characterisation of isotropic sprays
has been used by Crampin in \cite{Crampin07, Crampin19}. We will call the semi-basic $1$-form $\xi$ in formula \eqref{isotropic
  Curvature} the \emph{curvature $1$-form} of the isotropic spray.
\begin{lem} [Differential Bianchi identity for the curvature $1$-form] \label{lem:Bianchi}
In dimension $n>2$, the curvature $1$-form of an isotropic spray satisfies $d_h\xi=0$.
\end{lem}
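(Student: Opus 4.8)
The plan is to deduce $d_{h}\xi=0$ from the second Bianchi identity for a spray. Since $R=\tfrac{1}{2}[h,h]$, the graded Jacobi identity for the Fr\"olicher--Nijenhuis bracket applied to the single $(1,1)$-tensor $h$ gives $[h,[h,h]]=0$, i.e. the Bianchi identity
\[
d_{h}R=[h,R]=0 .
\]
I would take this as the starting point and insert the isotropic form \eqref{isotropic Curvature} of $R$ into it.

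Expanding $d_{h}R$ is where the structure of the canonical connection does the work. Two facts shorten the computation: the canonical connection of a spray is torsion-free, so $d_{h}J=[h,J]=0$, and it is homogeneous, so $d_{h}\mathcal{C}=[h,\mathcal{C}]=-[\mathcal{C},h]=0$. Treating $d_{h}$ as a graded derivation of the products $\wedge$ and $\otimes$, these two identities kill all but two terms and yield
\[
d_{h}R=(d_{h}\xi)\wedge J-(d_{h}d_{J}\xi)\otimes\mathcal{C}=0 .
\]
Moreover, since $[h,J]=0$ forces the derivations to anticommute, $d_{h}d_{J}=-d_{J}d_{h}$, the coefficient of $\mathcal{C}$ can be rewritten through $\beta:=d_{h}\xi$, so the whole identity becomes the single relation $\beta\wedge J=-(d_{J}\beta)\otimes\mathcal{C}$.

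The crucial and final step is to separate these two terms. I would read the relation as an equality of vector-valued $3$-forms whose vector parts lie, respectively, in the image of $J$ (the full vertical distribution) and in the line spanned by $\mathcal{C}=JS$. Contracting the semi-basic $2$-form $\beta\wedge J$ with the $J$-structure reproduces $\beta$ with the multiplicity factor $n-2$ (the familiar trace of $\beta\wedge\operatorname{Id}$), while the same contraction of $(d_{J}\beta)\otimes\mathcal{C}$ returns a $2$-form built from $d_{J}\beta$. This is exactly where $n>2$ enters: the factor $n-2$ is then invertible, and combining the resulting equation with the Euler relations (both $\xi$ and $\beta=d_{h}\xi$ are homogeneous of the same weight) shows that the scalar governing the $\otimes\mathcal{C}$-term is constrained to two incompatible homogeneity weights and must vanish. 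Hence $\beta=d_{h}\xi=0$.

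I expect the separation to be the only real obstacle: a priori a nonzero $d_{h}\xi$ could hide inside the $\mathcal{C}$-term, and ruling this out is precisely what the trace-plus-homogeneity argument achieves only for $n>2$. When $n=2$ the vertical distribution is two-dimensional, the map $\beta\mapsto\beta\wedge J$ degenerates (its trace factor $n-2$ vanishes), and $d_{h}\xi$ need not be zero; this breakdown is exactly the obstruction measured by the Cotton (Liouville) $2$-form of the spray, which is why the statement is restricted to $n>2$.
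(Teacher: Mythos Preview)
Your approach is essentially the paper's: start from the Bianchi identity $[h,R]=0$, expand using $[h,J]=0$ and $[h,\mathcal{C}]=0$ to obtain $d_h\xi\wedge J - d_hd_J\xi\otimes\mathcal{C}=0$, and then take the semi-basic trace to extract the factor $n-2$. Up to this point your outline and the paper coincide.

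The final step, however, is not correctly described. There is no argument via ``two incompatible homogeneity weights''; that phrasing does not correspond to any mechanism that forces $\beta=d_h\xi$ to vanish. What the paper actually does after the trace identity
\[
(n-2)\,d_h\xi \;-\; i_S d_hd_J\xi \;=\; 0
\]
is a two-stage computation. First, apply $i_S$ once more: since $i_S^2=0$, this gives $(n-2)\,i_Sd_h\xi=0$, and \emph{this} is precisely where $n>2$ enters, yielding $i_Sd_h\xi=0$. Second, rewrite $-i_Sd_hd_J\xi=i_Sd_Jd_h\xi$ and evaluate it via the commutation rule
\[
i_Sd_J\eta \;=\; -\,d_Ji_S\eta \;+\; \mathcal{L}_{JS}\eta \;+\; i_{[J,S]}\eta,
\]
applied to $\eta=d_h\xi$. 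Using $JS=\mathcal{C}$, $[J,S]=\operatorname{Id}-2v$, the fact that $d_h\xi$ is semi-basic, the homogeneity $\mathcal{L}_{\mathcal{C}}d_h\xi=d_h\xi$, and the just-obtained $i_Sd_h\xi=0$, one gets $i_Sd_Jd_h\xi=3\,d_h\xi$. Substituting back yields $(n+1)\,d_h\xi=0$, hence $d_h\xi=0$. So homogeneity does enter, but only as the single relation $\mathcal{L}_{\mathcal{C}}d_h\xi=d_h\xi$ inside this commutator computation, not as a clash of weights. Your sketch would become a complete proof once this step is carried out explicitly.
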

\begin{proof}
If $S$ is any spray over $M$, then we have  $[h,{\mathcal C}]=0$,
$[h,J]=0$ (the induced connection is homogenous and torsion-free),
$[h,R]=0$ (Bianchi identity); see, e.g., \cite{Grifone72} and \cite{SLK}. 
Now suppose that $S$ is isotropic. Then, using the just mentioned
properties of the induced connection and applying formula (22) in
\cite{Youssef}, we get
\begin{eqnarray*}
0=[h,R]=[h, \xi\wedge J - d_J\xi \otimes {\mathcal C}] = d_h\xi \wedge
  J - d_hd_J\xi \otimes {\mathcal C}.
\end{eqnarray*}
Applying formulae (26) and (27) in \cite{Youssef}, we take the
semi-basic trace of the right-hand side to obtain 
\begin{eqnarray}
(n-2)d_h\xi - i_Sd_hd_J\xi =0. \label{trhr}
\end{eqnarray}
Acting by the operator $i_S$ on both sides of the last equality, we
find that $(n-2)i_Sd_h\xi=0$, whence $i_Sd_h\xi=0$. Taking this into
account, we can manipulate the second expression on the left hand side
of \eqref{trhr} as follows
\begin{eqnarray*}
- i_Sd_hd_J\xi = i_Sd_Jd_h\xi = - d_Ji_Sd_h\xi + {\mathcal L}_{JS}d_h\xi
  + i_{[J,S]}d_h\xi =  {\mathcal L}_{{\mathcal C}}d_h\xi  +
  i_{\operatorname{Id}-2v}d_h\xi = d_h\xi + 2d_h\xi = 3d_h\xi. 
\end{eqnarray*}
This concludes the proof.
\end{proof}
The semi-basic $2$-form $d_h\xi$ appears in \cite{Crampin07, Crampin19} as an obstruction for
isotropic sprays to be projectively $R$-flat. For $n>2$
this obstruction is trivial.

\section{Finsler metrics of constant flag curvature and their
  projective deformation}
In this section we provide three necessary and sufficient conditions
(CC-conditions: \eqref{isotropic1}, \eqref{dJ:xi} and \eqref{dh:xi}) for a
Finsler metric to have constant curvature (Theorem
\ref{CONSTANT:FC}). By studying the projective invariance of these
CC-conditions, we arrive at a Finslerian
version of Beltrami's theorem for dimension $n>1$ (Theorem \ref{Beltrami Theorem}).

A Finsler metric $F$ determines a unique spray $S$ on $T_0M$, called
its geodesic (or canonical) spray (see, e.g., \cite[p. 541]{SLK}). We
derive the geometric data of a Finsler metric from its geodesic spray.

A Finsler metric has \emph{scalar flag curvature} if there exists a
$0$-homogeneous function $\kappa\in C^\infty(T_0M)$ such that the
Jacobi endomorphism can be expressed as follows:
\begin{equation}\label{Jacobi:R curvature}
\Phi=\kappa(F^2J-Fd_JF\otimes\mathcal{C}).
\end{equation} 
For a Finsler metric of
scalar flag curvature the geodesic spray is isotropic; 
the Ricci scalar $\rho$ and the semi-basic $1$-form $\alpha$ are
given by
\begin{equation} \label{rak}
\rho=\kappa F^2\ \textrm{and} \  \alpha=\kappa Fd_JF,
\end{equation}
respectively. In view of Lemma \ref{lem:iso-R} and formulae
\eqref{rak} and \eqref{xi}, a Finsler metric has scalar flag
curvature if and only if its curvature tensor is of the form
\eqref{isotropic Curvature} with curvature $1$-form 
\begin{equation}\label{xik}
\xi=\frac{1}{3F}d_J\left(\kappa F^3\right) = \frac{F^2}{3}d_J\kappa + \kappa Fd_JF.
\end{equation}
If the Finsler metric $F$ has constant curvature (i.e., the function
$\kappa$ is constant), then $\xi=\kappa Fd_JF$, $d_J\xi=0$, so its curvature
tensor reduces to
\begin{equation}\label{CFC:R}
R=\kappa Fd_JF\wedge J.
\end{equation}

\begin{thm} [Finslerian version of Schur's lemma for dimension $n> 1$] \label{CONSTANT:FC}
Let $S$ be the geodesic spray of a Finsler metric $F$. Then $F$ has
constant curvature if and only if
\begin{eqnarray}\label{isotropic1}
\text{ S is isotropic } & \text{(this is always true for
  n=2);}
\end{eqnarray}
and the curvature 1-form satisfies
\begin{eqnarray}
\label{dJ:xi}
d_J\xi=0; &  \\
\label{dh:xi}
d_h\xi=0 & \text{ (this is always true  for $n >2$).}
\end{eqnarray}
\end{thm}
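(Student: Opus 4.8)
The plan is to prove both implications, the crux being to translate ``constant curvature'' (the function $\kappa$ is constant) into the three CC-conditions and back, using the explicit form of the curvature $1$-form. The one preliminary fact I will invoke is that for a Finsler metric isotropy of the geodesic spray is equivalent to scalar flag curvature: the excerpt already records scalar flag curvature $\Rightarrow$ isotropy, and the converse follows because the Jacobi endomorphism of a Finsler metric is self-adjoint with respect to the fundamental tensor, which forces the semi-basic $1$-form $\alpha$ in \eqref{phi} to be proportional to the Hilbert form $Fd_JF$; writing $\alpha=\kappa Fd_JF$ and using $i_Sd_JF=\mathcal{C}F=F$ gives $\rho=i_S\alpha=\kappa F^2$, which is precisely \eqref{Jacobi:R curvature}. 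Consequently, whenever $S$ is isotropic the curvature $1$-form is the one in \eqref{xik}, namely $\xi=\tfrac{F^2}{3}d_J\kappa+\kappa Fd_JF$.

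For the forward implication, assume $\kappa$ is constant. Then $F$ has scalar flag curvature, so $S$ is isotropic and \eqref{isotropic1} holds (for $n=2$ this needs no assumption). With $\kappa$ constant, \eqref{xik} collapses to $\xi=\kappa Fd_JF$ and, as already noted below \eqref{xik}, $d_J\xi=0$, which is \eqref{dJ:xi}. Finally, for $n>2$ the equation $d_h\xi=0$ is exactly Lemma \ref{lem:Bianchi}, while for $n=2$ I would compute $d_h\xi=\kappa(d_hF\wedge d_JF+Fd_hd_JF)$ and note that both terms vanish, since $d_hF=0$ for the geodesic spray of a Finsler metric and $d_hd_JF=-d_Jd_hF=0$ because $[h,J]=0$ yields $d_hd_J+d_Jd_h=0$. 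This gives \eqref{dh:xi}.

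For the converse, suppose the three CC-conditions hold. By \eqref{isotropic1} and the preliminary remark, $F$ has scalar flag curvature, so $\xi=\tfrac{F^2}{3}d_J\kappa+\kappa Fd_JF$. A direct computation, using $d_J^2=0$ and $d_JF\wedge d_JF=0$, gives $d_J\xi=-\tfrac{F}{3}\,d_JF\wedge d_J\kappa$; applying $i_S$ to the hypothesis $d_J\xi=0$ and using $i_Sd_JF=F$ together with $i_Sd_J\kappa=\mathcal{C}\kappa=0$ yields $d_J\kappa=0$, i.e. $\kappa$ is constant along the fibres and hence descends to a function on $M$. Then $\xi=\kappa Fd_JF$, and the same identities as in the forward direction give $d_h\xi=Fd_h\kappa\wedge d_JF$; the hypothesis $d_h\xi=0$ therefore reads $d_h\kappa\wedge d_JF=0$.

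The main obstacle is the final step, extracting that $\kappa$ is constant from $d_h\kappa\wedge d_JF=0$, because in dimension $2$ the Bianchi identity of Lemma \ref{lem:Bianchi} gives no information (it only yields $d_h\xi\wedge J=0$, which is vacuous for $n=2$). Here I would use that $\kappa$ is already a function on $M$, so $d_h\kappa$ coincides with the pullback of $d\kappa$ and is independent of the fibre variable, whereas the covectors $d_JF_{(x,y)}=\partial_{y^i}F\,dx^i$ sweep out an open cone in $T^*_xM$ as $y$ varies, by strong convexity of $F$ (nondegeneracy of the Legendre transform). The relation $d\kappa\wedge d_JF=0$ can hold for a full open cone of $d_JF$ only if $d\kappa=0$; since $M$ is connected, $\kappa$ is constant, and by \eqref{CFC:R} the curvature tensor reduces to the constant-curvature form. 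This same convexity argument simultaneously handles $n>2$, where $d_h\xi=0$ is automatic but still delivers $d_h\kappa\wedge d_JF=0$ and hence $\kappa$ constant.
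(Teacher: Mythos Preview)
Your proof is correct and tracks the paper's argument closely through the forward direction and, in the converse, through the reduction to $d_h\kappa\wedge d_JF=0$. The divergence is in the final step. The paper contracts this relation with $i_S$ to get $S(\kappa)\,d_JF-F\,d_h\kappa=0$, uses $d_h\kappa=d_JS(\kappa)$ (valid once $d_J\kappa=0$) to rewrite it as $d_J\bigl(S(\kappa)/F\bigr)=0$, and then argues by contradiction: if $S(\kappa)\neq 0$ one would have $F=f^c/h^v$ for basic functions $f,h$, making $F$ fibrewise linear, which is impossible for a genuine Finsler metric. Your route instead exploits that, once $d_J\kappa=0$, the semi-basic form $d_h\kappa$ is the pullback of $d\kappa$ from $M$ and is therefore fibre-independent, so the relation $d\kappa_x\wedge d_JF_{(x,y)}=0$ must hold for every $y$; strong convexity then forces $d\kappa_x=0$. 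One small correction: because $\partial_{y^i}F$ is $0$-homogeneous in $y$, the image of $y\mapsto d_JF_{(x,y)}$ is the dual indicatrix, a hypersurface in $T^*_xM$, rather than an open cone; what your argument actually needs---and what convexity delivers---is that this hypersurface is not contained in any line through the origin. Your approach is more directly geometric and sidesteps the somewhat ad hoc contradiction via linearity of $F$; the paper's approach, by contrast, stays entirely within the Fr\"olicher--Nijenhuis calculus and does not invoke the Legendre transform.
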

\begin{proof}
Assume first that $F$ is of constant curvature. Then, as we just have
seen, the geodesic spray is isotropic
and $d_J\xi=0$. It remains to show that \eqref{dh:xi} is also
satisfied.   Since $d_hF=0$ and $[h,J]=0$, we have 
$d_h\xi=\kappa Fd_hd_JF=\kappa Fd_Jd_hF=0$, as wanted.

Conversely,  suppose that the three CC-conditions are satisfied. Then \eqref{isotropic1} and Lemma
8.2.2 in \cite{Sh} (or Proposition 9.4.9 in \cite{SLK}) imply that $F$
has scalar flag curvature $\kappa$, and hence the curvature $1$-form
is given by \eqref{xik}. 
To prove the constancy of $\kappa$, it is sufficient to show that
$d_J\kappa=0$ and $d_h\kappa=0$.

By our condition \eqref{dJ:xi}, 
\begin{eqnarray*}
0=d_J\xi=\frac{2F}{3}d_JF\wedge d_J\kappa + Fd_J\kappa \wedge d_JF=\frac{F}{3}d_J\kappa \wedge d_JF,
\end{eqnarray*}
 whence 
\begin{equation}\label{dJ:xi=0}
d_J\kappa\wedge d_JF=0.
\end{equation}
Applying $i_S$ to both sides of \eqref{dJ:xi=0}, we get
$ (i_Sd_J\kappa) d_JF- (i_Sd_JF) d_J\kappa ={\mathcal C}(\kappa)d_JF -
{\mathcal C}(F) d_J\kappa = -F d_J\kappa$, therefore $d_J\kappa=0$,
and hence $\xi =\kappa F d_JF$. Now, by condition \eqref{dh:xi}, 
\begin{eqnarray*}
0=d_h\xi=d_h(\kappa F d_JF) =Fd_h\kappa \wedge d_JF + \kappa Fd_hd_JF=Fd_h\kappa \wedge d_JF,
\end{eqnarray*}
whence 
\begin{equation}\label{dh:xi=0}
d_h\kappa\wedge d_JF=0.
\end{equation}
Continuing as above, we find that 
\begin{equation}\label{iSdh:xi}
S(\kappa)d_JF-Fd_h\kappa=0.
\end{equation}
Since 
\begin{eqnarray*}
d_J{\mathcal L}_S\kappa={\mathcal L}_Sd_J\kappa + d_{[J,S]}\kappa =
  d_{h-v}\kappa = d_h\kappa,
 \end{eqnarray*}
we can rewrite \eqref{iSdh:xi} as follows:
 \begin{equation*}
  S(\kappa)d_JF-Fd_JS(\kappa)=0 \Leftrightarrow  S(\kappa)\frac{d_JF}{F^2}-\frac{1}{F}d_JS(\kappa)=0\Leftrightarrow d_J\left(\frac{S(\kappa)}{F}\right)=0.
 \end{equation*}
Arguing by contradiction, suppose that  $S(\kappa)$ is a nonzero
function. Since $d_J\kappa=0$, $\kappa$ is a vertical lift, i.e.,
$\kappa=f^v$, for some smooth function $f$ on $M$. Then
$S(\kappa)=S(f^v)=f^c$ and, by the above equality,
$d_J(f^c/F)=0$. Here $f^c$ denotes the complete lift of $f\in
C^{\infty}(M)$ (see, e.g.,  \cite[Definition 4.1.3]{SLK} for
the definitions of vertical and complete lifts of functions). Thus,
$f^c/F=h^v$, and $F=f^c/h^v$, for some smooth function $h$ on
$M$. Since $f^c$ is fiberwise linear, $h^v$ is fiberwise constant, it
follows that $F$ is a fiberwise linear function. We arrived at a
contradiction, hence $S(\kappa)=0$. Going back to formula
\eqref{iSdh:xi}, we obtain $d_h\kappa=0,$ which
assures that $\kappa$ is constant and therefore the Finsler metric has
constant curvature. 

It is well known that any two-dimensional spray manifold is isotropic
(see, e.g., \cite[Lemma 8.1.10]{Sh}), so in this case the first CC-condition is
automatically satisfied.

According to Lemma \ref{lem:Bianchi}, in dimension $n>2$, the
curvature $1$-form of an isotropic spray also automatically satisfies the
third CC-condition. 
\end{proof}
The first two CC-conditions provide an equivalent characterisation for Finsler metrics of
\emph{isotropic curvature} (scalar curvature does not depend on the
fiber coordinates) that were studied in \cite{LS18}. These conditions
were used to define a Weyl-type curvature tensor in \cite[(4.1)]{BC18}
that characterises Finsler metrics of constant curvature in dimension $n>2$. 

When $n>2$, there are alternative proofs of Theorem \ref{CONSTANT:FC}
in Finsler geometry, \cite[Proposition 26.1]{M86}, \cite[Theorem 9.4.11]{SLK}. 

Next, we study the projective invariance of the three
CC-conditions. The isotropy condition is known to be invariant and 
we prove that the third CC-condition is also invariant. As to the
second CC-condition, we show that it is invariant only for those projective
deformations $P$ satisfying the Hamel equation
\begin{eqnarray}
d_hd_JP=0. \label{Hamel}
\end{eqnarray}
A $1^+$-homogeneous, smooth function $P$ on $T_0M$, which satisfies the equation
\eqref{Hamel} is called a \emph{Hamel function}.

\begin{thm}[Finslerian version of Beltrami's theorem for dimension $n>1 $] \label{Beltrami Theorem}
If two Finsler metrics over the same connected manifold
have projectively equivalent geodesic sprays and one is of constant
flag curvature then the other is also of constant curvature if and only if the
projective factor is a Hamel function.
\end{thm}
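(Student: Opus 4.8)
The plan is to apply the characterisation from Theorem \ref{CONSTANT:FC} to both metrics and to track how each of the three CC-conditions behaves under the projective deformation $\widetilde{S} = S - 2P\mathcal{C}$, where $S$ and $\widetilde{S}$ are the geodesic sprays of $F$ and $\widetilde{F}$. Since $F$ has constant curvature, Theorem \ref{CONSTANT:FC} tells us that $S$ is isotropic with curvature $1$-form $\xi$ satisfying $d_J\xi = 0$ and $d_h\xi = 0$. The goal is to show that $\widetilde{S}$ satisfies all three CC-conditions --- and hence, again by Theorem \ref{CONSTANT:FC}, that $\widetilde{F}$ has constant curvature --- precisely when $P$ obeys the Hamel equation \eqref{Hamel}.

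First I would record the transformation of the basic geometric data. Using the Fr\"olicker--Nijenhuis bracket together with the identity $\mathcal{L}_{\mathcal{C}}J = -J$, a direct computation gives the new horizontal projector
\begin{equation*}
\tilde{h} = h - PJ - d_JP\otimes\mathcal{C}.
\end{equation*}
From here one obtains the transformation of the Jacobi endomorphism $\tilde{\Phi} = \tilde{\rho}J - \tilde{\alpha}\otimes\mathcal{C}$; in particular the Ricci scalar changes by the classical projective factor, $\tilde{\rho} = \rho + \psi$ with $\psi := P^2 - S(P)$. Because the isotropy condition \eqref{phi} is preserved under projective deformation (and is automatic when $n=2$), the spray $\widetilde{S}$ is again isotropic, so the first CC-condition holds for $\widetilde{F}$ without further hypotheses. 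Feeding $\tilde{\rho}$ and the transformed $1$-form $\tilde{\alpha}$ into formula \eqref{xi} then yields the curvature $1$-form of $\widetilde{S}$ in the form $\tilde{\xi} = \xi + \zeta$, where $\zeta$ is an explicit $1^+$-homogeneous semi-basic $1$-form built from $P$ (comprising terms such as $P\,d_JP$, $d_JS(P)$ and $d_hP$).

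With $\tilde{\xi}$ in hand, the two remaining CC-conditions are analysed by applying $d_J$ and $d_{\tilde{h}}$. The relevant algebraic facts are $[J,h]=0$ and $[J,J]=0$, which give $d_Jd_h = -d_hd_J$ and $d_Jd_J = 0$. For the third condition the correction terms in $d_{\tilde{h}}\tilde{\xi} - d_h\xi$ cancel upon invoking these identities and the homogeneity of $P$, establishing the projective invariance $d_{\tilde{h}}\tilde{\xi} = 0 \Leftrightarrow d_h\xi = 0$; since $d_h\xi = 0$ for $F$, this condition passes to $\widetilde{F}$, and for $n>2$ it is in any case guaranteed by Lemma \ref{lem:Bianchi}. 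The crux is the second condition: applying $d_J$ to $\tilde{\xi}$, the terms $P\,d_JP$ and $d_JS(P)$ contribute nothing (each being annihilated by $d_J$), while the term $d_hP$ produces $d_Jd_hP = -d_hd_JP$, so that all of $d_J\zeta$ collapses to a single nonzero multiple of the Hamel $2$-form, whence, given $d_J\xi = 0$,
\begin{equation*}
d_J\tilde{\xi} = 0 \Longleftrightarrow d_hd_JP = 0.
\end{equation*}

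It remains to assemble the equivalence in both directions. If $P$ is a Hamel function, then $\widetilde{S}$ is isotropic, $d_J\tilde{\xi} = 0$ and $d_{\tilde{h}}\tilde{\xi} = 0$, so Theorem \ref{CONSTANT:FC} applied to $\widetilde{F}$ shows that $\widetilde{F}$ has constant curvature; conversely, if $\widetilde{F}$ has constant curvature then $d_J\tilde{\xi} = 0$ by the same theorem, whence $P$ satisfies \eqref{Hamel}. The argument is uniform in the cases $n=2$ and $n>2$, the only difference being which of the first and third CC-conditions is automatic. I expect the main obstacle to be the explicit determination of $\zeta$ and the verification that $d_J\zeta$ reduces exactly to a nonzero multiple of $d_hd_JP$: this requires careful bookkeeping of the Fr\"olicker--Nijenhuis identities and of the homogeneity degrees, and it is here that the distinction between Hamel and non-Hamel projective factors is produced.
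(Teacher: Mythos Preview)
Your proposal is correct and follows essentially the same route as the paper: reduce both metrics to the three CC-conditions of Theorem \ref{CONSTANT:FC}, use that isotropy is projectively invariant, and compare $d_J\tilde{\xi}$, $d_{\tilde{h}}\tilde{\xi}$ with $d_J\xi$, $d_h\xi$. Two small points where the paper is sharper: the curvature $1$-form transforms simply as $\tilde{\xi} = \xi + P\,d_JP - d_hP$ (no $d_JS(P)$ survives), and the invariance $d_{\tilde{h}}\tilde{\xi} = d_h\xi$ actually requires the identity $d_h^2 = d_R$ together with the isotropic form \eqref{isotropic Curvature} of $R$ to evaluate $d_RP$, not merely $[J,h]=0$ and homogeneity --- so that step, rather than the $d_J$-calculation you flag, is where most of the work lies.
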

\begin{proof}

Let $S$ and $\widetilde{S}$ be the geodesic sprays of two
projectively related Finsler metrics. Then
$\widetilde{S}=S-2P{\mathcal C}$, for a $1^{+}$-homogeneous, smooth
function  $P$ on $T_0M$. 

The isotropy condition is invariant under projective deformations and
therefore both $S$ and $\widetilde{S}$ are isotropic. According to
\cite[Proposition 4.4]{BM12}, the corresponding curvature $1$-forms $\xi$ and
$\widetilde{\xi}$ are related by 
\begin{equation}\label{sfc-general}
\widetilde{\xi}=\xi+Pd_JP-d_{h}P.
\end{equation}
If we apply $d_J$ to both sides of this equality, we have
\begin{equation} \label{pr:djxi}
d_J\widetilde{\xi}=d_J\xi - d_Jd_hP.
\end{equation}
To prove the projective invariance of the third CC-condition, 
we use that the horizontal projectors
are related by $\widetilde{h}=h-PJ - d_JP\otimes
{\mathcal C}$, see \cite[(4.8)]{BM12}, and hence
\begin{equation}\label{dhxi1}
\begin{aligned}
d_{\widetilde{h}}\widetilde{\xi} &
=d_{\widetilde{h}}\left(\xi+Pd_JP-d_hP\right) =
d_{\widetilde{h}}\xi+d_{\widetilde{h}}Pd_JP-d_{\widetilde{h}}d_hP \\ 
& = d_{h-PJ-d_JP\otimes\mathcal{C}}\xi +
d_{h-PJ-d_JP\otimes\mathcal{C}}Pd_JP-d_{h-PJ-d_JP\otimes\mathcal{C}}d_hP
\\ & = d_h\xi - Pd_J\xi-d_JP\wedge\xi+d_hP\wedge d_JP+Pd_hd_JP - d_RP
+ Pd_Jd_hP+d_JP\wedge d_hP \\ &=d_h\xi-Pd_J\xi+\xi\wedge
d_JP-d_RP+Pd_Jd_hP+Pd_hd_JP \\ &=d_h\xi-Pd_J\xi+\xi\wedge d_JP-d_RP. 
\end{aligned}
\end{equation}
Now, using the form \eqref{isotropic Curvature} of the
 tensor $R$, we have
\begin{eqnarray*}
d_RP=d_{\xi\wedge J-d_J\xi\otimes\mathcal{C}}P=\xi\wedge d_JP-Pd_J\xi,
\end{eqnarray*}
which together with \eqref{dhxi1} allows us to conclude that
\begin{equation} \label{pr:dhxi}
d_{\widetilde{h}}\widetilde{\xi}=d_h\xi.
\end{equation}
According to Theorem \ref{CONSTANT:FC}, the Finsler metric $F$ has
constant curvature if and only if the three CC-conditions are
satisfied: $S$ is isotropic, $d_J\xi=0$ and $d_h\xi=0$. Similarly, the
projectively related Finsler metric $\widetilde{F}$ has constant
curvature if and only if it satisfies the corresponding three CC-conditions. In
view of formulae \eqref{pr:djxi} and \eqref{pr:dhxi}  this is true if
anf only if $d_hd_JP=0$, which means that $P$ is a Hamel function. 
\end{proof}
Formula \eqref{pr:dhxi} shows that the curvature $2$-form $d_h\xi$ is a
projective invariant of isotropic sprays. According to Lemma \ref{lem:Bianchi}, this
curvature $2$-form vanishes
in dimension $n>2$ and therefore it is a useful projective
invariant for $2$-dimensional sprays, which are always isotropic. This corresponds to the
Liouville (Cotton) projective invariant for affine sprays in
$2$-dimensional Riemannian geometry, \cite{Mat}. In Finsler geometry,
Crampin \cite{Crampin19} defines a projective Cotton
tensor for arbitrary sprays, which reduces to the curvature $2$-form
$d_h\xi$ for isotropic sprays.
 
\subsection*{Acknowledgments} We express our thanks to Mike Crampin
and J\'ozsef Szilasi for their comments and
suggestions on this work. We owe this version of the manuscript to the
carefull reading and editorial suggestions of J\'ozsef Szilasi. 

This work has been supported by Ministry of
Research and Innovation within Program 1 -- Development of the national
RD system, Subprogram 1.2 – Institutional Performance -- RDI excellence
funding projects, Contract no.34PFE/19.10.2018.

\end{document}